\newtheorem{thm}{Theorem}
\newtheorem{cor}[thm]{Corollary}
\newtheorem{lem}[thm]{Lemma}
\newtheorem{prop}[thm]{Proposition}
\theoremstyle{definition}
\newtheorem*{theorem*}{Theorem}
\newcommand{\la}{\left\langle}
\newcommand{\ar}{\right\rangle}
\newcommand{\NN}{\mathbb{N}}
\newcommand{\ZZ}{\mathbb{Z}}
\newcommand{\RR}{\mathbb{R}}
\newcommand{\CC}{\mathbb{C}}
\newcommand{\nd}{\!\!\not|}
\newcommand{\fii}{\varphi}
\theoremstyle{remark}
\numberwithin{equation}{section}
\newcommand{\norm}[1]{\left\Vert#1\right\Vert}
\newcommand{\abs}[1]{\left\vert#1\right\vert}
\def\<{\langle}
\def\>{\rangle}
\begin{document}
\title[]{Orthogonality questions in the Hardy space related to $\zeta$-zeros}
\author{Francisco Calderaro, Juan Manzur, Waleed Noor \and Charles Santos}%
\address{IMECC, Universidade Estadual de Campinas, Campinas-SP, Brazil.}
\email{$\mathrm{francisco.calderaro027@gmail.com}$ (Francisco Calderaro)}

\email{$\mathrm{waleed@unicamp.br}$ (Waleed Noor) Corresponding Author}
\address{ICMC, University of São Paulo, São Carlos-SP, Brazil.}
\email{$\mathrm{ch.charlesfsantos@gmail.com}$ (Charles F. Santos)}

\address{Departamento de Matemáticas y Estadística, Universidad del Norte, Barranquilla, Colombia.}
\email{$\mathrm{juan\_manzur123@hotmail.com}$ (Juan Manzur)}

\begin{abstract} A Hardy space approach to the Nyman-Beurling and B\'aez-Duarte criterion for the Riemann Hypothesis (RH) was introduced recently in \cite{Noor} and further developed in \cite{Aditya preprint}. It states that the RH holds if and only if a particular sequence of functions $(h_k)_{k\geq 2}$ is complete in the Hardy space $H^2$. This article is concerned with orthogonality questions related to the family $(h_k)_{k\geq 2}$. The first goal is to analyze the orthogonal complement of $\mathcal{N}=\mathrm{span}(h_k)_{k\geq 2}$ in $H^2$. Unbounded Toeplitz operators on $H^p$ spaces and de Branges-Rovnyak spaces play a central role and our results show that the size and dimension of $\mathcal{N}^\perp$ reveal information on the zeros of the Riemann $\zeta$-function. The second goal is to show that $(h_k)_{k\geq 2}$ possesses a complete biorthogonal sequence in $H^2$. We also discuss a folklore conjecture about the number of $\zeta$-zeros if the RH fails.
\end{abstract}

{\subjclass[2010]{Primary; Secondary}}
\keywords{Riemann hypothesis, Hardy space, local Dirichlet space, de Branges-Rovnyak space, Smirnov class.}
\maketitle{}

\section*{Introduction}
A classical result of Nyman and Beurling (see \cite{Beurling RH}, \cite{Nyman}) shows that the Riemann Hypothesis (RH) is equivalent to the completeness of \(\{\rho_\lambda : 0\leq \lambda \leq 1\}\) in \(L^2(0,1)\)  where \(\rho_\lambda(x) = \{\lambda / x\} - \lambda \{1/x\} \) and \(\{x\}\) denotes the fractional part of \(x\). This is furthermore equivalent to the characteristic function \(\chi_{(0,1)}\) belonging to the closed linear span of \(\{\rho_\lambda : 0\leq \lambda \leq 1\}\). Half a century later Báez-Duarte \cite{Baez-Duarte} strenghthened this result by showing that RH is true if and only if \(\chi_{(0,1)} \in \overline{\mathrm{span}\{\rho_{1/k}: k \geq 2\}}\). See the expository article of Bagchi \cite{Bagchi} and the survey of Balazard \cite{balazard survey}. Recently the Nyman-Beurling and B\'aez-Duarte approaches to the RH have been explored via tools from Hardy space $H^2$ theory \cite{Noor} and other analytic function spaces \cite{Aditya preprint}.

For each $k\geq2$, define 
\begin{align*}
h_{k}(z)=\dfrac{1}{1-z}\log\left(\dfrac{1+z+\dots+z^{k-1}}{k}\right)
\end{align*}
and denote by $\mathcal{N}$ the linear span of $\{h_k:k\geq 2\}$. That each $h_k$ belongs to $H^2$ was proved in \cite[Lemma 7]{Noor}. One of the main results of \cite{Noor} was a reformulation of B\'aez-Duarte's result as a completeness problem in $H^2$. Then in \cite{Aditya preprint} the same completeness problem in the $H^p$ spaces was shown to provide zero-free half planes for the Riemann $\zeta$-function. We state both results here as one.

\begin{thm}\label{Noor density result}
The RH holds if and only if $\mathcal{N}$ is dense in $H^2$. If $\mathcal{N}$ is dense in $H^p$ for some $1<p\leq 2$, then 
\[
\zeta(s)\neq 0 \ \ \mathrm{for} \ \ \Re(s)>\frac{1}{p}.
\]
The density of $\mathcal{N}$ in $H^1$ gives the known zero-free region $\Re(s)\geq1$.		
	
\end{thm}

Although $\mathcal{N}$ is unconditionally dense in $H^p$ for $0<p<1$ (see \cite[Cor. 4.6]{Aditya preprint}), this provides no new information regarding $\zeta$-zeros. Also note that the RH is equivalent to $\mathcal{N}^\perp=\{0\}$ in $H^2$ by Theorem \ref{Noor density result}. This suggests the question of the size and dimension of $\mathcal{N}^\perp$ and their possible relation to the $\zeta$-zeros. The first archetypal result addressing this is the following (See \cite[Thm. 12]{Noor}). 

\begin{thm}\label{Nperp local D}
$
\mathcal{N}^\perp\cap \mathcal{D}_{{\delta}_1}=\{0\},
$
where $\mathcal{D}_{{\delta}_1}$ denotes the local Dirichlet space at $1$.
		\end{thm}
Since $\mathcal{D}_{{\delta}_1}\subset H^2\subset H^p$ for $0<p<2$, Theorems $\ref{Noor density result}$ and $\ref{Nperp local D}$ inspire the following question: which linear spaces of analytic functions on $\mathbb{D}$ with $X_1\subset H^2\subset X_2$ satisfy
\[
(1) \  \mathcal{N}^\perp\cap X_1=\{0\} \ \ \ \mathrm{and} \ \ \ (2) \  \mathcal{N} \ \mathrm{is}  \ \mathrm{dense} \ \mathrm{in} \ X_2 \ ?
\]
Interestingly, both $(1)$ and $(2)$ are equivalent if one takes 
$X_2=X$ a Frechet space and $X_1=X^*$ its Cauchy dual (see Proposition \ref{density <-> orthogonality}). In Section 2 we analyze this \emph{orthogonality question} using tools from de Branges-Rovnyak spaces and unbounded Toeplitz operators on $H^p$ (see Proposition \ref{hk/phi Hp}), and in particular develop a new approach for finding zero-free half-planes for the $\zeta$ function (see Theorem \ref{phiN dense zero free halfplane}).

In Section 3 we deal with the \emph{biorthogonality question}: Does the sequence $(h_k)_{k\geq 2}$ posses a complete biorthogonal sequence in $H^2$? (see Subsection $1.5$). We affirmatively answer this question (see Theorem \ref{mainthm}). To provide some context, Vasyunin showed in \cite{vasyunin} that the B\'aez-Duarte sequence $\{\rho_{1/k}: k \geq 2\}$ is minimal by constructing a biorthogonal sequence for it in $L^2(0,1)$. Whereas the sequence $\{\rho_{1/k}: k \geq 2\}$ is not complete in $L^2(0,1)$, the completeness of $(h_k)_{k\geq 2}$ in $H^2$ is equivalent to the RH by Theorem \ref{Noor density result}. The property of completeness of a sequence is not in general inherited by its biorthogonal sequence. But it may do so in very special cases such as for sequences of complex exponentials in $L^2(-\pi,\pi)$ (see \cite{young}). Therefore an affirmative answer to this question (independent of RH) is intriguing. 

Finally in  Section 4 we discuss a folklore conjecture about the $\zeta$-zeros which we name the \emph{RH failure} (RHF) conjecture: \emph{If the RH fails, then it fails infinitely often}. More precisely, either $\zeta$ has no nontrivial zeros outside the critical line, or it has infinitely many. The main result of this section (Theorem \ref{RHF implies dim N perp 0 or infty}) states that
\[
\mathrm{RHF} \ \mathrm{conjecture} \implies \mathrm{dim}(\mathcal{N}^\perp) \ \mathrm{is} \ \mathrm{either} \ 0 \ \mathrm{or} \ \infty.
\] 
We were unable to locate a reference for this conjecture in the literature. But an informative discussion of this conjecture does appear in mathoverflow.net \cite{Feldman}.

\section{Preliminaries} \label{Sec:Background}

We denote by $\mathbb{D}$ and $\mathbb{T}$ the open unit disk and the unit circle respectively. By $\mathrm{Hol}(\mathbb{D})$ we denote the space of holomorphic functions on $\mathbb{D}$ and the shift operator is defined by $(Sf)(z)=zf(z)$ for $f\in \mathrm{Hol}(\mathbb{D})$.

\subsection{Hardy spaces and the Smirnov class} A holomorphic function $f$ on $\mathbb{D}$ belongs to the Hardy-Hilbert space $H^2$ if
\[
||f||_{H^2}=\sup_{0\leq r<1}\left(\frac{1}{2\pi}\int_0^{2\pi}|f(re^{i\theta})|^2d\theta\right)^{1/2}<\infty.
\]
The space $H^2$ is a Hilbert space with the $\ell^2$-inner product
\[
\langle f,g \rangle=\sum_{n=0}^\infty a_n\overline{b_n},
\]
where $(a_n)_{n\in\mathbb{N}}$ and $(b_n)_{n\in\mathbb{N}}$ are the Fourier coefficients for $f$ and $g$ respectively. For any $f\in H^2$ and $\zeta\in\mathbb{T}$, the radial limit $f^*(\zeta):=\lim_{r\to 1^-}f(r\zeta)$ exists $m$-a.e. on $\mathbb{T}$, where $m$ denotes the normalized Lebesgue measure on $\mathbb{T}$. Analogously for $p>0$, the Hardy space $H^p$ consists of those holomorphic $f$ on $\mathbb{D}$ such that
\[
\Vert f \Vert_{H^p}^p = \sup_{0<r<1} \int_\mathbb{T} \abs{f(rz)}^p \,\mathrm{d}m(z) \,<\, \infty .
\]
The $H^p$ are Banach spaces for $p\geq 1$ and complete metric spaces for $0<p<1$ and $H^\infty$ denotes the space of bounded holomorphic functions on $\mathbb{D}$. A function $f$ is called a cyclic vector for the shift $S$ in $H^p$ if $\mathrm{span}(S^nf)_{n\geq 0}=\mathbb{C}[z]f$ is dense in $H^p$. When $p=2$ these cyclic vectors are commonly known as outer functions. Since $H^\infty\subset H^p$ for all $p>0$, the $H^\infty$ outer functions are cyclic for all $H^p$ spaces. The Smirnov class $N^+$ consists of all holomorphic functions $g/h$ on $\mathbb{D}$ such that $g,h\in H^\infty$ and $h$ is an outer function. The space $N^+$ is a topological algebra with respect to pointwise multiplication and $g/h\in N^+$ is a unit if both $g$ and $h$ are outer functions.
The topology on $N^+$ can be metrized with the translation-invariant complete metric
\[ \label{Eq:Ndistance}
d(f,g) = \int_\mathbb{T} \log(1+\abs{f-g}) \,\mathrm{d}m, \qquad f,g \in N^+ .
\]
Similar to $H^p$ spaces, convergence in $N^+$ implies locally unifom convergence on $\mathbb{D}$ and functions have radial limits $m$-a.e. on $\mathbb{T}$. In fact we have $H^p\subset H^q \subset N^+$ for all $0 < q< p \leq \infty$. Duren \cite{Duren} is a classic reference for the $H^p$ and $N^+$ spaces.

\subsection{Local Dirichlet spaces} Let $\mu$ be a finite positive Borel measure on $\mathbb{T}$, and let $P\mu$ denote its Poisson integral. The \emph{generalized Dirichlet space} $\mathcal{D}_\mu$ consists of $f\in H^2$ satisfying
\[
\mathcal{D}_\mu(f):=\int_\mathbb{D}\abs{f'(z)}^2P\mu(z)dA(z)<\infty.
\]
Then $\mathcal{D}_\mu$ is a Hilbert space with norm $\norm{f}_{\mathcal{D}_\mu}^2:=\norm{f}_2^2+\mathcal{D}_\mu(f)$. If $\mu=m$, then $\mathcal{D}_m$ is the classicial Dirichlet space. If $\mu=\delta_\zeta$ is the Dirac measure at $\zeta\in\mathbb{T}$, then $\mathcal{D}_\zeta:=\mathcal{D}_{\delta_\zeta}$ is called the \emph{local Dirichlet space} at $\zeta$ and in particular
\begin{equation}\label{Dirichlet integral}
\mathcal{D}_{\delta_\zeta}(f)=\int_\mathbb{D}\abs{f'(z)}^2\frac{1-\abs{z^2}}{\abs{z-\zeta}^2}dA(z).
\end{equation}
The recent book \cite{Dirichlet book} contains a comprehensive treatment of local Dirichlet spaces and the following result establishes a criterion for their membership.
\begin{thm}\label{Local Dirichlet membership}(See  \cite[Thm. 7.2.1]{Dirichlet book})
	Let $\zeta\in\mathbb{T}$ and $f\in\mathrm{Hol}(\mathbb{D})$. Then
	$\mathcal{D}_{\delta_\zeta}(f)<\infty$ if and only if \[f(z)=(z-\zeta)g(z)+a\] for some $g\in H^2$ and $a\in \mathbb{C}$. In particular $f^*(\zeta)$ exists for all $f\in\mathcal{D}_{\zeta}$.
	\end{thm}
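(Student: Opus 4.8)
The plan is to funnel the whole statement through one coefficient identity. After a rotation one may take $\zeta$ arbitrary and work directly; writing $f(z)=\sum_{n\ge0}\hat f(n)z^n$, the identity to establish is
\[
\mathcal{D}_{\delta_\zeta}(f)=\sum_{m=1}^\infty\Big|\sum_{j=m}^\infty\hat f(j)\zeta^{j}\Big|^{2}.
\]
First I would prove this when $f$ is holomorphic on a neighbourhood of $\overline{\mathbb D}$, where every series in sight converges geometrically: expanding $\abs{f'}^2$ and $P_\zeta(\rho e^{i\theta})=\sum_{n\in\mathbb Z}\rho^{\abs n}\bar\zeta^{\,n}e^{in\theta}$ into Fourier series on the circles $\abs z=\rho$, multiplying, and integrating over $\mathbb D$ in polar coordinates gives $\mathcal{D}_{\delta_\zeta}(f)=\sum_{k,l\ge1}\min(k,l)\,\hat f(k)\overline{\hat f(l)}\,\zeta^{k-l}$, and the tautology $\min(k,l)=\#\{m\ge1:m\le k,\ m\le l\}$ turns this into the claimed sum of squares. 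With the identity in hand, set $a:=\sum_{n\ge0}\hat f(n)\zeta^n=f(\zeta)$ and $g(z):=(f(z)-a)/(z-\zeta)$, which is again holomorphic past $\overline{\mathbb D}$ since $\zeta\notin\mathbb D$; from $\hat f(j)=\hat g(j-1)-\zeta\hat g(j)$ for $j\ge1$ a one-line telescoping yields $\sum_{j\ge m}\hat f(j)\zeta^{j}=\zeta^{m}\hat g(m-1)$, so that $\mathcal{D}_{\delta_\zeta}(f)=\sum_{m\ge1}\abs{\hat g(m-1)}^2=\norm g_2^2$ and $a=f^*(\zeta)$. This disposes of the theorem for $f$ holomorphic on a neighbourhood of $\overline{\mathbb D}$.

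For the ``if'' direction in general, let $f=a+(z-\zeta)g$ with $g\in H^2$; then $f\in H^2$ too. Applying the identity to the dilate $F_r(z)=(rz-\zeta)g(rz)$ of $F:=(z-\zeta)g$ and running the same telescoping while keeping the factors $r^{j}$ gives $\sum_{j\ge m}\widehat{F_r}(j)\zeta^j=\hat g(m-1)(r\zeta)^m+\zeta(r-1)g^{(m)}(r\zeta)$, where $g^{(m)}:=\sum_{i\ge m}\hat g(i)z^i$; combining this with the pointwise bound $\abs{g^{(m)}(r\zeta)}\le\norm{g^{(m)}}_2\,r^m(1-r^2)^{-1/2}$ and summing over $m$ produces $\mathcal{D}_{\delta_\zeta}(F_r)\le4\norm g_2^2$ for all $r<1$, whence $\mathcal{D}_{\delta_\zeta}(f)=\mathcal{D}_{\delta_\zeta}(F)\le4\norm g_2^2<\infty$ by lower semicontinuity of $\mathcal{D}_{\delta_\zeta}$ along the locally uniform convergence $F_r'\to F'$ (Fatou). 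To pin down the exact value, approximate $g$ in $H^2$ by its Taylor polynomials $g_n$ and put $f_n:=a+(z-\zeta)g_n$: these are polynomials, so $\mathcal{D}_{\delta_\zeta}(f_n)=\norm{g_n}_2^2$ by the first step, while $\mathcal{D}_{\delta_\zeta}(f_n-f)=\mathcal{D}_{\delta_\zeta}((z-\zeta)(g_n-g))\le4\norm{g_n-g}_2^2\to0$; since $\mathcal{D}_{\delta_\zeta}^{1/2}$ is a seminorm this forces $\mathcal{D}_{\delta_\zeta}(f)=\norm g_2^2$. Finally $f(r\zeta)=a+\zeta(r-1)g(r\zeta)$ with $\abs{(r-1)g(r\zeta)}\le\norm g_2\sqrt{(1-r)/(1+r)}\to0$, so $f^*(\zeta)=a$.

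For the ``only if'' direction, assume $\mathcal{D}_{\delta_\zeta}(f)<\infty$. From $\abs{z-\zeta}\le2$ we get $P_\zeta(z)\ge(1-\abs z^2)/4$, hence $\int_{\mathbb D}\abs{f'}^2(1-\abs z^2)\,dA<\infty$ and so $f\in H^2$ by the Littlewood--Paley identity. Now consider the map $\Phi(a,g):=a+(z-\zeta)g$ from $\mathbb C\oplus H^2$ into $\mathcal{D}_{\delta_\zeta}$. By the ``if'' direction it is well defined with $\mathcal{D}_{\delta_\zeta}(\Phi(a,g))=\norm g_2^2$; it is injective because $1/(z-\zeta)\notin H^2$, so the representation is unique; and its range is closed in $\mathcal{D}_{\delta_\zeta}$ (a $\mathcal{D}_{\delta_\zeta}$-Cauchy sequence $\Phi(a_n,g_n)$ makes $g_n$ Cauchy in $H^2$, and then $a_n$ converges by evaluating at a fixed point of $\mathbb D$). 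Since the range contains every polynomial $p=p(\zeta)+(z-\zeta)\tfrac{p-p(\zeta)}{z-\zeta}$, and polynomials are dense in $\mathcal{D}_{\delta_\zeta}$ \cite{Dirichlet book}, the range is all of $\mathcal{D}_{\delta_\zeta}$. Therefore $f=a+(z-\zeta)g$ with $g\in H^2$, and the ``if'' direction supplies $\mathcal{D}_{\delta_\zeta}(f)=\norm g_2^2$ and $a=f^*(\zeta)$.

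The delicate step is the coefficient identity, more precisely its validity past the classical Dirichlet space: the double series $\sum_{k,l}\min(k,l)\hat f(k)\overline{\hat f(l)}\zeta^{k-l}$ need not converge absolutely once $\sum_n n\abs{\hat f(n)}^2=\infty$, which is exactly why the argument is organised around functions holomorphic on a neighbourhood of $\overline{\mathbb D}$ and then transported to all of $H^2$ by dilation, lower semicontinuity, and density of polynomials, rather than proved in a single computation. An alternative that bypasses power series is to first establish the boundary representation $\mathcal{D}_{\delta_\zeta}(f)=\int_{\mathbb T}\big|\tfrac{f^*(\eta)-f^*(\zeta)}{\eta-\zeta}\big|^2\,dm(\eta)$ (read as $+\infty$ when $f^*(\zeta)$ fails to exist) and then invoke Smirnov's theorem $N^+\cap L^2(\mathbb T)=H^2$ to recognise $(f-f^*(\zeta))/(z-\zeta)$ — which lies in $N^+$ and has square-integrable boundary values — as an element of $H^2$.
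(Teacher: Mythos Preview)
The paper does not prove this theorem at all: it is quoted in the preliminaries with a citation to \cite[Thm.~7.2.1]{Dirichlet book} and then used as a black box. There is therefore no ``paper's own proof'' to compare your proposal against.

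Your argument is essentially correct and follows a standard route. The coefficient identity $\mathcal{D}_{\delta_\zeta}(f)=\sum_{m\ge1}\bigl|\sum_{j\ge m}\hat f(j)\zeta^j\bigr|^2$ for functions holomorphic past $\overline{\mathbb D}$, the telescoping to $\|g\|_2^2$, the dilation bound $\mathcal{D}_{\delta_\zeta}(F_r)\le 4\|g\|_2^2$, and the polynomial approximation to pin down the exact value all check out. One point to be careful about in the ``only if'' direction: you invoke density of polynomials in $\mathcal{D}_{\delta_\zeta}$ by citing \cite{Dirichlet book}, the same source as the theorem you are proving. There is no genuine circularity---polynomial density in $\mathcal{D}_\mu$ (Richter--Sundberg) is established independently of the local characterization---but you should confirm the logical order in whatever source you ultimately cite. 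Your closing remark about the boundary formula $\mathcal{D}_{\delta_\zeta}(f)=\int_{\mathbb T}\bigl|\tfrac{f^*(\eta)-f^*(\zeta)}{\eta-\zeta}\bigr|^2\,dm(\eta)$ together with Smirnov's theorem is in fact closer to the argument in \cite{Dirichlet book} and sidesteps this issue entirely.
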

So each local Dirichlet space $\mathcal{D}_{\zeta}=(S-\zeta I)H^2+\mathbb{C}$ is a proper subspace of $H^2$. We define the $H^p$-analogues of these spaces for $p>0$ by
\[
\mathcal{D}_\zeta^p:=(S-\zeta I)H^p+\mathbb{C} \ 
\]
 and note that $\mathcal{D}_{\zeta}^2=\mathcal{D}_{\delta_\zeta}$ and $\mathcal{D}_{\zeta}^p\subsetneq \mathcal{D}_{\zeta}^q$ for $q<p$ since $H^p\subsetneq H^q$. Straightforward but lengthy computations show that $\mathcal{D}_{\zeta}^p\subsetneq H^2$ for $p>1$ and $H^2\subsetneq \mathcal{D}_{\zeta}^p$ for $0<p<\frac{2}{3}$.

\subsection{The de Branges-Rovnyak spaces}
Given $\psi\in L^\infty(\mathbb{T})$, the corresponding Toeplitz operator $T_\psi:H^2\to H^2$ is defined by
\[
T_\psi f:=P_+(\psi f )
\] 
where $P_+ : L^2(\mathbb{T})\to H^2$ denotes the orthogonal projection of $L^2(\mathbb{T})$ onto $H^2$. 
Clearly $T_\psi$ is a bounded operator on $H^2$ with $||T_\psi|| \leq||\psi||_{L^\infty}$. If $h\in H^\infty$, then
$T_h$ is the operator of multiplication by $h$ and its adjoint is $T_{\overline{h}}$. Given $b$ in the closed unit ball of $H^\infty$, the \emph{de Branges-Rovnyak}
space $\mathcal{H}(b)$ is the image of $H^2$ under the operator $(I -T_bT_{\overline{b}})^{1/2}$.
The general theory of $\mathcal{H}(b)$ spaces divides into two distinct cases, according to whether $b$ is an extreme point or a non-extreme point of the unit ball of $H^\infty$. We shall be concerned only with the non-extreme case. In this case there exists a unique outer function $a\in H^\infty$ such that $a(0)>0$ and $|a^*|^2+|b^*|^2=1$ a.e. on $\mathbb{T}$. The pair $(b,a)$ is called a \emph{Pythagorean pair} and the function $b/a$ belongs to the Smirnov class $N^+$. That all $N^+$ functions arise as the quotient of a pair associated to a non-extreme function was shown by Sarason \cite{unbounded toeplitz}.  The two-volume work (\cite{Hb book vol 1}\cite{Hb book vol 2}) is an encyclopedic reference for these spaces. 

If $\varphi$ is a rational function in $N^+$ the corresponding pair $(b,a)$ is also rational (see \cite[Remark. 3.2]{unbounded toeplitz}). Constara and Ransford \cite{Costara-Ransford} characterized the rational pairs $(b,a)$ for which $\mathcal{H}(b)$ is a generalized Dirichlet space.

\begin{thm}\label{which DR=Dirichlet}(See \cite[Theorem 4.1]{Costara-Ransford}) Let $(b,a)$ be a rational pair and $\mu$ a finite positive measure on $\mathbb{T}$. Then $\mathcal{H}(b)=\mathcal{D}_\mu$ if and only if
	\begin{enumerate}
		\item the zeros of $a$ on $\mathbb{T}$ are all simple, and
		\item the support of $\mu$ is exactly equal to this set of zeros.
	\end{enumerate}
	\end{thm}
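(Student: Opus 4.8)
The plan is to first obtain an explicit description of $\mathcal{H}(b)$ when the pair is rational, which localises the membership problem to the finitely many zeros of $a$ on $\mathbb{T}$, and then to compare this description directly with Theorem~\ref{Local Dirichlet membership}. Normalise the pair by writing $a=p/q$ and $b=r/q$ with $p,q,r$ polynomials having no common zero, $q$ zero-free on $\overline{\mathbb{D}}$, $p$ zero-free on $\mathbb{D}$, and $\abs{p}^2+\abs{r}^2=\abs{q}^2$ on $\mathbb{T}$; then the zeros of $a$ on $\mathbb{T}$ are exactly the zeros $\zeta_1,\dots,\zeta_n\in\mathbb{T}$ of $p$, of multiplicities $m_1,\dots,m_n$, say. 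Combining Theorem~\ref{dom of adjoint} with the standard description of $\mathcal{H}(b)$ for non-extreme $b$, we have $f\in\mathcal{H}(b)$ if and only if $T_{\bar b}f=T_{\bar a}f^+$ for some $f^+\in H^2$; since $a$ is outer this $f^+$ is unique, and $\norm{f}_{\mathcal{H}(b)}^2=\norm{f}_2^2+\norm{f^+}_2^2$.

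Rewriting this as $\bar bf-\bar af^+\in\overline{H^2_0}$ and multiplying through by the conjugate-reversed polynomial $\tilde q(z)=z^{\deg q}\overline{q(1/\bar z)}$ (a bounded injective analytic multiplier, all of whose zeros lie in $\mathbb{D}$) converts membership into the requirement that there be a polynomial $P$ with $\deg P<\deg q$ such that $f^+=(\tilde r_0 f-P)/\tilde p_0\in H^2$, where $\tilde r_0,\tilde p_0$ are formed from $r,p$ in the same way. Now $\tilde p_0$ has a zero of order $m_j$ at each $\zeta_j\in\mathbb{T}$, and all its remaining zeros lie in $\mathbb{D}$; those interior zeros, counted with multiplicity, are exactly $\deg q-\sum_j m_j$ in number, so they can always be absorbed into the $\deg q$ free coefficients of $P$ and place no constraint on $f$, whereas the boundary factors $(z-\zeta_j)^{m_j}$ force $f$ to be \emph{regular of order $m_j$ at $\zeta_j$}:
\[
f(z)=c_{j,0}+c_{j,1}(z-\zeta_j)+\dots+c_{j,m_j-1}(z-\zeta_j)^{m_j-1}+(z-\zeta_j)^{m_j}h_j(z),\qquad h_j\in H^2 .
\]
A dimension count then yields: $f\in\mathcal{H}(b)$ if and only if $f$ admits such an expansion at every $\zeta_j$, and $\norm{f}_{\mathcal{H}(b)}^2\asymp\norm{f}_2^2+\sum_j\norm{h_j}_2^2$. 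For $m_j=1$ this is precisely the condition $f\in\mathcal{D}_{\delta_{\zeta_j}}$ of Theorem~\ref{Local Dirichlet membership}, with $\norm{h_j}_2^2=\mathcal{D}_{\delta_{\zeta_j}}(f)$.

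Granting this description, the proof concludes as follows. If (1) and (2) hold, then $\mu$ is a finite positive measure on the finite set $\{\zeta_1,\dots,\zeta_n\}$, so $\mu=\sum_j c_j\delta_{\zeta_j}$ with $c_j>0$ and $\mathcal{D}_\mu(f)=\sum_j c_j\mathcal{D}_{\delta_{\zeta_j}}(f)$; since every $m_j=1$, the description gives $\mathcal{H}(b)=\bigcap_j\mathcal{D}_{\delta_{\zeta_j}}=\mathcal{D}_\mu$ as sets, and equivalence of the norms then follows from the closed graph theorem. Conversely, suppose $\mathcal{H}(b)=\mathcal{D}_\mu$. For any $g\in H^2$ that is analytic across $\mathbb{T}\setminus\{\zeta_1,\dots,\zeta_n\}$, the function $(z-\zeta_1)\cdots(z-\zeta_n)g$ is regular of order $m_j$ at each $\zeta_j$ and hence lies in $\mathcal{H}(b)$; choosing $g$ with no radial limit at a prescribed point and comparing with the constraints that $\mathcal{D}_\mu$ imposes near $\supp\mu$ (recall $\mathcal{D}_\mu$ contains every $f$ with $f'\in H^\infty$, and $\mu\ge\mu(\{\eta\})\delta_\eta$ makes evaluation at any atom $\eta$ bounded on $\mathcal{D}_\mu$) forces $\supp\mu$ to be finite and equal to $\{\zeta_1,\dots,\zeta_n\}$; the inclusion $\{\zeta_j\}\subseteq\supp\mu$ holds because a zero $\zeta_0$ of $a$ lying outside $\supp\mu$ would leave $\mathcal{D}_\mu$ unconstrained at $\zeta_0$ while $\mathcal{H}(b)$ demands order-$m_0$ regularity there. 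Finally, were some $m_{j_0}\ge 2$, the function $f_0(z)=(z-\zeta_{j_0})\log\bigl(1-\overline{\zeta_{j_0}}z\bigr)^{-1}$ would lie in $\mathcal{D}_\mu$ (it is analytic off $\zeta_{j_0}$, and a short estimate gives $\mathcal{D}_{\delta_{\zeta_{j_0}}}(f_0)<\infty$), yet $f_0/(z-\zeta_{j_0})$ has no radial limit at $\zeta_{j_0}$, so $f_0$ is not regular of order $2$ there and $f_0\notin\mathcal{H}(b)$, a contradiction; hence all $m_j=1$.

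I expect the genuine difficulty to be the structural description of $\mathcal{H}(b)$ in the second paragraph: the denominator-clearing must be carried out carefully, keeping the zeros of $a$ on $\mathbb{T}$ separate from those inside $\mathbb{D}$, and the dimension count must be verified so that the auxiliary polynomial $P$ exactly absorbs the interior conditions while imposing nothing on $f$ there; one must also check that the quadratic form $\norm{f}_2^2+\sum_j\norm{h_j}_2^2$ is comparable to $\norm{f}_{\mathcal{H}(b)}^2$, and pass from ``equal as sets'' to ``equal as Hilbert spaces'' via the closed graph theorem. The function-theoretic inputs to the converse --- the existence of an $H^2$ function analytic off a prescribed finite subset of $\mathbb{T}$ and without radial limit at a chosen point, and the finiteness of $\mathcal{D}_{\delta_\zeta}\bigl((z-\zeta)\log(1-\bar\zeta z)^{-1}\bigr)$ --- are routine.
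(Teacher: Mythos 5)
The paper does not actually prove this statement --- it is quoted from Costara and Ransford \cite{Costara-Ransford} --- so there is no internal proof to measure yours against; what follows assesses your outline on its own terms. Your architecture does match that of the original source: everything is made to rest on the structural description of $\mathcal{H}(b)$ for a rational pair, namely that $f\in\mathcal{H}(b)$ if and only if $f=a_1g+P$ with $g\in H^2$, $P$ a polynomial of degree less than $N=\sum_j m_j$, and $a_1(z)=\prod_j(z-\zeta_j)^{m_j}$ collecting the boundary zeros of $a$ (equivalently your ``regular of order $m_j$ at $\zeta_j$'' condition); after that, the comparison with Theorem \ref{Local Dirichlet membership} is the easy part. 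But that description is the entire content of the theorem, and you have only sketched it: the denominator-clearing step, the claim that the interior zeros of $\tilde p_0$ impose solvable conditions on $P$ and none on $f$ (this uses that $p$ and $r$ have no common zeros), and the comparability of $\norm{f}_2^2+\sum_j\norm{h_j}_2^2$ with $\norm{f}_{\mathcal{H}(b)}^2$ are all asserted rather than proved. You flag this honestly, but as written the proposal does not contain a proof of the one genuinely hard ingredient.

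The converse direction also has two concrete problems. First, a slip: if some $m_j\geq 2$, then $(z-\zeta_1)\cdots(z-\zeta_n)g$ is regular of order $1$ at $\zeta_j$, not of order $m_j$; you must multiply by $a_1$ itself. That is fixable. Second, and more seriously, your argument that $\supp\mu\subseteq\{\zeta_1,\dots,\zeta_n\}$ only treats the atomic part of $\mu$: boundedness of evaluation at an atom $\eta$ lets you exclude $\eta\notin\{\zeta_j\}$ by exhibiting $g\in H^2$ with no radial limit at $\eta$, but if $\mu$ has a continuous part charging an arc disjoint from $\{\zeta_1,\dots,\zeta_n\}$, then no single point evaluation is bounded on $\mathcal{D}_\mu$ and the radial-limit device says nothing. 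To produce an element of $a_1H^2\setminus\mathcal{D}_\mu$ in that case one needs a different tool, e.g.\ the local Douglas formula
\[
\mathcal{D}_\mu(f)=\int_{\mathbb{T}}\int_{\mathbb{T}}\frac{\abs{f^*(\lambda)-f^*(\eta)}^2}{\abs{\lambda-\eta}^2}\,\mathrm{d}m(\lambda)\,\mathrm{d}\mu(\eta),
\]
combined with a $g\in H^2$ whose local Dirichlet integral diverges at every point of the arc; nothing of this kind appears in your outline. The remaining witnesses --- $\log(1-\overline{\zeta_0}z)^{-1}$ to show every boundary zero of $a$ must lie in $\supp\mu$, and $(z-\zeta_{j_0})\log(1-\overline{\zeta_{j_0}}z)^{-1}$ to rule out $m_{j_0}\geq 2$ --- are correct and do the job once the support has been shown to be finite.
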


As an example, if $(b,a)$ is the rational pair associated with the $N^+$ function $\varphi(z)=\frac{1}{1-\zeta}$ for $\zeta\in\mathbb{T}$, then $\mathcal{H}(b)=\mathcal{D}_\zeta$ is a local Dirichlet space.

\subsection{Unbounded Toeplitz operators on $H^p$}  Sarason \cite{unbounded toeplitz} demonstrated how $\mathcal{H}(b)$ spaces appear naturally as the domains of some unbounded Toeplitz operators. Let $\varphi$ be holomorphic in $\mathbb{D}$ and $T_\varphi$ the operator of multiplication by $\varphi$ on the domain 
\begin{equation}\label{Toeplitz domain}
\mathrm{dom}(T_\varphi)=\{f\in H^2:\varphi f\in H^2\}.
\end{equation}
Then $T_\varphi$ is a closed operator, and $\mathrm{dom}(T_\varphi)$ is dense in $H^2$ if and only if $\varphi\in N^+$ (see \cite[Lemma 5.2]{unbounded toeplitz}). In this case its adjoint $T_\varphi^*$ is also densely defined and closed. In fact the domain of $T_\varphi^*$ is a de Branges-Rovnyak space. 

\begin{thm}\label{dom of adjoint}(See \cite[Prop. 5.4]{unbounded toeplitz})
Let $\varphi$ be a nonzero function in $N^+$ with $\varphi=b/a$, where $(b,a)$ is the associated pair. Then $\mathrm{dom}(T_\varphi^*)=\mathcal{H}(b)$.
\end{thm}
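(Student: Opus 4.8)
The plan is to exhibit an explicit \emph{core} for the closed operator $T_\varphi$ on which $T_\varphi^*$ can be computed by routine operator algebra, and then to recognise the resulting domain as $\mathcal{H}(b)$. The natural candidate is $aH^2$: since $a,b\in H^\infty$, for each $u\in H^2$ we have $au\in H^2$ and $\varphi(au)=(b/a)(au)=bu\in H^2$, so $aH^2\subseteq\mathrm{dom}(T_\varphi)$ with $T_\varphi(au)=bu$. The crucial step is to show that $aH^2$ is a core for $T_\varphi$, i.e.\ that it is dense in $\mathrm{dom}(T_\varphi)$ for the graph inner product $\langle f,g\rangle+\langle\varphi f,\varphi g\rangle$. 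Since $T_\varphi$ is closed, $\mathrm{dom}(T_\varphi)$ is complete in this inner product, so it suffices to show that any $f\in\mathrm{dom}(T_\varphi)$ which is graph-orthogonal to every $au$, $u\in H^2$, vanishes.

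Graph-orthogonality says $\langle f,au\rangle+\langle\varphi f,bu\rangle=0$ for all $u\in H^2$, that is, $T_{\overline a}f+T_{\overline b}(\varphi f)=0$ in $H^2$, i.e.\ the $L^2(\mathbb{T})$ function $\overline a f+\overline b\,\varphi f$ is orthogonal to $H^2$. But a.e.\ on $\mathbb{T}$ one has $\varphi=b/a$ (with $a^*\neq0$ a.e., as $a$ is outer) and $|a|^2+|b|^2=1$, so
\[
\overline a f+\overline b\,\varphi f=\overline a f+\frac{|b|^2}{a}\,f=\frac{(|a|^2+|b|^2)f}{a}=\frac{f}{a}\qquad\text{a.e. on }\mathbb{T}.
\]
Now $f/a\in N^+$ (the quotient of $f\in H^2$ by the outer function $a$), and we have just shown that its boundary values equal $\overline a f+\overline b\,\varphi f\in L^2(\mathbb{T})$, since $a,b$ are bounded and $f,\varphi f\in H^2$. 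By Smirnov's theorem \cite{Duren}, a Smirnov-class function with $L^2$ boundary values lies in $H^2$, so $f/a\in H^2$; combined with $f/a\perp H^2$ this forces $f/a=0$, hence $f=0$. Thus $aH^2$ is a core.

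With the core in hand, set $A:=T_\varphi|_{aH^2}$, so that $\overline A=T_\varphi$ and therefore $T_\varphi^*=A^*$. Writing $A=T_b\,(T_a)^{-1}$, where $(T_a)^{-1}$ is the closed, densely defined inverse of the injective operator $T_a$, with $\mathrm{dom}\big((T_a)^{-1}\big)=\mathrm{ran}(T_a)=aH^2$, and using that $T_b$ is bounded while $T_a$ is a bounded injection with dense range (so $T_a^*=T_{\overline a}$ is too), the standard adjoint identities give
\[
A^*=\big((T_a)^{-1}\big)^*T_b^*=(T_{\overline a})^{-1}T_{\overline b}.
\]
Hence $\mathrm{dom}(T_\varphi^*)=\mathrm{dom}(A^*)=\{\,g\in H^2:T_{\overline b}g\in T_{\overline a}H^2\,\}$, and for such $g$ the vector $T_\varphi^*g$ is the unique $h\in H^2$ with $T_{\overline a}h=T_{\overline b}g$ (uniqueness because $T_{\overline a}$ is injective, $a$ being outer).

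Finally, $\{\,g\in H^2:T_{\overline b}g\in T_{\overline a}H^2\,\}$ is exactly the standard description of the de Branges--Rovnyak space of the non-extreme function $b$ with Pythagorean mate $a$: $g\in\mathcal{H}(b)$ if and only if $T_{\overline b}g$ lies in the range of $T_{\overline a}$, in which case, with $h=(T_{\overline a})^{-1}T_{\overline b}g$, one has $\|g\|_{\mathcal{H}(b)}^2=\|g\|_2^2+\|h\|_2^2$ (see \cite{Hb book vol 1}; this is also implicit in \cite{unbounded toeplitz}). Invoking this yields $\mathrm{dom}(T_\varphi^*)=\mathcal{H}(b)$, and as a bonus $\|g\|_{\mathcal{H}(b)}^2=\|g\|_2^2+\|T_\varphi^*g\|_2^2$ for $g\in\mathcal{H}(b)$. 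The main obstacle is the core lemma, and concretely the point that the a priori merely Smirnov-class function $f/a$ is forced into $H^2$: this is precisely where the Pythagorean identity $|a|^2+|b|^2=1$ and Smirnov's $N^+\cap L^2=H^2$ theorem do the essential work, everything else being formal manipulation of closed operators together with the cited description of $\mathcal{H}(b)$.
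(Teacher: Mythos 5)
Your argument is correct. Note, however, that the paper offers no proof of this statement at all: it is imported verbatim from Sarason's \emph{Unbounded Toeplitz operators} as Proposition 5.4, so what you have written is essentially a reconstruction of Sarason's own argument rather than an alternative to anything in this article. The ingredients match his: the factorization $T_\varphi(au)=T_b u$ on $aH^2$, the adjoint identities $(BC)^*=C^*B^*$ for $B$ bounded and $\bigl(T_a^{-1}\bigr)^*=\bigl(T_{\overline a}\bigr)^{-1}$, and the range criterion $g\in\mathcal{H}(b)\Leftrightarrow T_{\overline b}g\in T_{\overline a}H^2$ for non-extreme $b$ (which is the one genuinely nontrivial external input, since the paper defines $\mathcal{H}(b)$ via $(I-T_bT_{\overline b})^{1/2}$; citing it is fine, but it is doing real work). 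One refinement worth knowing: the same computation you use for the core step --- $|a|^2+|b|^2=1$ on $\mathbb{T}$ forces $|f/a|^2=|f|^2+|\varphi f|^2$, so $f/a\in N^+$ has $L^2$ boundary data and Smirnov gives $f/a\in H^2$ --- actually proves the stronger equality $\mathrm{dom}(T_\varphi)=aH^2$, which is how Sarason proceeds; your weaker graph-density claim then becomes automatic. All the individual steps you give (injectivity and dense range of $T_a$ and $T_{\overline a}$ because $a$ is outer, closedness of $T_\varphi$, the orthogonality bookkeeping $T_{\overline a}f+T_{\overline b}(\varphi f)=0$) check out.
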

Choosing the symbol $\varphi(z)=\frac{1}{\zeta-z}$ in Theorem \ref{dom of adjoint} in conjunction with Theorem \ref{which DR=Dirichlet} gives $\mathrm{dom}(T_\varphi^*)=\mathcal{D}_\zeta$ which played a key role in the proof of Theorem \ref{Nperp local D} (see \cite{Noor}). Our goal here is to extend these ideas to $H^p$ spaces for all $p > 1$. Let $\varphi \in N^+$ and define the analytic Toeplitz operator on $H^p$ with symbol $\varphi$ by
\[
T_\varphi f = \varphi f, \ \ \mathrm{where} \quad f \in \mathrm{dom}_p\,(T_\varphi):=\{f\in H^p \,:\, \phi  f \in H^p\}.
\]
These $T_\varphi$ are bounded on $H^p$ precisely when $\varphi\in H^\infty$ (see the survey article \cite{Vukotic}). For $\varphi=\frac{b}{a}\in N^+$ with $a,b\in H^\infty$ and $a$ outer as usual, these $T_\varphi$ are densely defined on $H^p$ for $p>1$. Indeed, $\mathrm{dom}_p(T_\varphi)$ contains the dense subspace $aH^p$ since $a$ is outer and $T_\varphi (aH^p)= bH^p\subset H^p$. It follows then that the adjoint $T_\varphi^*$ is well-defined on the dual $(H^p)^*=H^q$ where $\frac{1}{p}+\frac{1}{q}=1$. The domain of $T_\varphi^*$ is then defined  by
\[
\mathrm{dom}_q(T_\varphi^*):=\{g\in H^q:\exists \ h\in H^q \ s.t \ \langle f,h\rangle=\langle \varphi f, g\rangle \ \ \forall \ f\in \mathrm{dom}_p\,(T_\varphi)
\}
\]
where $\langle f,h\rangle:=\int_{\mathbb{T}}f\overline{h}dm$ represents the $H^p$-$H^q$ duality. The elements in $\mathrm{dom}_q(T_\varphi^*)$ can be characterized  via the bounded Toeplitz operators $T_a$ and $T_b$ as follows.

\begin{lem}\label{dom Toeplitz Hp}
Given $\varphi=\frac{b}{a}\in N^+$ as described above, a function $g\in \mathrm{dom}_q(T_\varphi^*)$ if and only if there exists an $h\in H^q$ such that $T_b^* g = T_a^* h$.
\end{lem}
\begin{proof}
Suppose $g\in\mathrm{dom}_q(T_\varphi^*)$. Then  $\langle f,h\rangle=\langle \varphi f, g\rangle$ for some $h\in H^q$ and for all $f\in aH^p\subset \mathrm{dom}_p\,(T_\varphi)$. Writing $f=a\Tilde{f}$ for $\Tilde{f}\in H^p$, we get
\[
\langle f,h\rangle=\langle \varphi f, g\rangle \ \Leftrightarrow \ \langle f,h\rangle=\langle bf/a, g\rangle \ \Leftrightarrow \ \langle a\Tilde{f},h\rangle=\langle b\Tilde{f}, g\rangle \ \forall \ \tilde{f}\in H^p 
\]
which is equivalent to $T^*_ah=T^*_bg$. This argument works in both directions because $a$ is an $H^\infty$ outer function and hence $a H^p$ is dense in $H^p$.
\end{proof}

We can now extend the identity $\mathrm{dom}(T_\varphi^*)=\mathcal{D}_\zeta$ from $H^2$ to all $H^p$ with $p>1$.
\begin{prop} \label{dom D_zeta^p} Let $\varphi(z)=\frac{1}{\zeta-z}$ for $\zeta\in \mathbb{T}$ and $T_\varphi$ the densely defined Toeplitz operator on $H^p$ for any $p>1$. We then have
\[
\mathrm{dom}_q ( T_\varphi^*) = \mathcal{D}_\zeta^q, \ \ \mathrm{where} \ \ \frac{1}{p}+\frac{1}{q} = 1.
\]
\end{prop}
\begin{proof}
Choosing $\varphi = \frac{b}{a}$ with $b(z) = 1$ and $a(z) = \zeta-z$ (which is outer) in Lemma \ref{dom Toeplitz Hp}, we get $g \in \mathrm{dom}_q(T_\varphi^*)$ if and only if $g = T_{\zeta-z}^*h = (\overline{\zeta}I - S^*)h$ for some $h \in H^q$. Therefore, it suffices to verify that $(\overline{\zeta}I - S^*) H^q = \mathcal{D}_\zeta^q$. For any $h\in H^q$, we have 
\[
(\overline{\zeta}I - S^*)h = (\zeta I - S) (-\overline{\zeta}S^*h) + \overline{\zeta}h(0) \in  \mathcal{D}_\zeta^q:=(S-\zeta I)H^q+\mathbb{C}
\]
and therefore $(\overline{\zeta}I - S^*) H^q \subset \mathcal{D}_\zeta^q$. Conversely, if $h\in H^q$ and $c\in \mathbb{C}$ then
\[(\zeta I - S) h + c = (\overline{\zeta}I - S^*) \zeta (c-Sh) \in (\overline{\zeta}I - S^*) H^q\]
and hence $\mathcal{D}_\zeta^q\subset (\overline{\zeta}I - S^*) H^q$ which concludes the proof.
\end{proof}

\subsection{Cauchy duality} \label{Subsec:Cauchy} Let $X$ be a complete metrizable linear subspace of $\mathrm{Hol}(\mathbb{D})$. Inspired by terminology used by Malman and Seco \cite{Malman-Seco}, we call $X^*$ the Cauchy dual of $X$ if any continuous linear functional on $X$ can be represented by the Cauchy pairing
\[
\< f , g \> := \lim_{r\to1^-} \int_{\mathbb{T}} f(r\zeta) \overline{g(r\zeta)} \,\mathrm{d}m(\zeta) \,,\quad f\in X, \ g\in X^* \,.
\]
If $H^2 \subset X$ then $X^* \subset H^2$ and vice-versa. Hence when both $f$ and $g$ are in $H^2$, the pairing above reduces to the standard inner product in $H^2$. Some examples of Cauchy duals for our context are listed below (see \cite{Duren},\cite{Girela},\cite{Yanagihara}).

\begin{enumerate}
\item $H^p$ and $H^q$ for $p>1$ and $1/p + 1/q = 1$,
\item $H^1$ and $\mathrm{BMOA}$ (analytic functions with bounded mean oscillation on $\mathbb{T}$),
\item $H^p$ for $1/2<p<1$ and $\Lambda_\alpha$ (the Lipschitz class of Hol$(\mathbb{D})$-functions with $\alpha$-H\"older continuous extension to $\mathbb{T}$, where $\alpha = 1/p-1$).
\item $N^+$ and the \emph{Gevrey class} $\mathcal{G}$ (Hol$(\mathbb{D})$-functions whose Taylor coefficients satisfy
$
a_n = O(e^{-c\sqrt{n}})
$
for some constant $c>0$).
\end{enumerate}

A deep result of Davis and McCarthy \cite{Davis-McCarthy} shows that the class $\mathcal{G}$ coincides with the universal multipliers for all non-extreme $\mathcal{H}(b)$ spaces. In particular $\mathcal{G}\subset \mathcal{H}(b)$ for all non-extreme $b$. The concept of Cauchy duality leads to an equivalence between orthogonality and density questions involving $\mathcal{N}$ which is explored in Section \ref{Sec:Nperp}.

\subsection{Minimality and biorthogonality}
Let \(\mathcal{H}\) be a Hilbert space. Two sequences \((e_n)_{ n \in \NN}\) and \((f_n)_{ n \in \NN}\) in \(\mathcal{H}\) are said to be \emph{biorthogonal} to each other if
\[\la e_n, f_m \ar = \delta_{nm} \ \ \ \forall \ n,m \in \NN\]
where \(\delta_{nm}\) is the Kronecker delta. The sequence (\(e_n)_{ n \in \NN}\) is called $\emph{minimal}$ if $e_n\notin\overline{\mathrm{span}}(e_k)_{k\neq n}$ for all $n\in\mathbb{N}$. The notions of biorthogonality, minimality and completeness are all related via the following well-known result.
\begin{prop} \label{biorthogonality}(see \cite[Lemma 3.3.1]{christensen})
	Let \((e_n)_{ n \in \NN}\) be a sequence in \(\mathcal{H}\). Then,
	\begin{itemize}
		\item[(i)] \((e_n)_{ n \in \NN}\) has a biorthogonal sequence if and only if \((e_n)_{ n \in \NN}\) is minimal.
		\item[(ii)] If \((e_n)_{ n \in \NN}\) has a biorthogonal sequence, then \((e_n)_{ n \in \NN}\) is complete in \(\mathcal{H}\) if and only if its biorthogonal sequence is unique.
	\end{itemize}
\end{prop}
In Section 3 we shall prove that the sequence  $(u_k)_{k \geq 2}$ defined by
\[
u_k(z) = \sum_{d | k} \frac{\mu(k/d)}{k/d} (z^{d-1} - z^d)
\]
forms a complete biorthogonal sequence for $(h_k)_{k\geq 2}$ in $H^2$, where \(\mu\) denotes the Möbius function defined on $\mathbb{N}$ by $\mu(k) = (-1)^s$ if $k$ is the product of
$s$ distinct primes, and $\mu(k) = 0$ otherwise.  



\subsection{The zeta kernels}
Let $X\subset\mathrm{Hol}(\mathbb{D})$ be a topological vector space where the monomials $(z^k)_{k\in\mathbb{N}}$ form a Schauder basis. It was shown in \cite{Aditya preprint} that for each $s\in\mathbb{C}\setminus\{0\}$ a linear functional $\Lambda^{(s)}$ can be defined on $X$ by assigning
\[
\Lambda^{(s)}(1) = -\frac{1}{s},\quad \Lambda^{(s)}(z^k) = -\frac{1}{s}\left((k+1)^{1-s}-k^{1-s}\right) \quad (k\geq1).
\] 
In particular $\Lambda^{(s)}$ is bounded on $H^p$ for $1< p\leq 2$ if $\Re s > 1/p$ and on $H^1$ if $\Re s \geq 1$ (see \cite[Prop. 4.7]{Aditya preprint}). So there exist functions $\kappa_s\in H^q$ with $1/p+1/q=1$ such that $\Lambda^{(s)}(f)=\langle f,\kappa_s\rangle$. The function $\kappa_s$ will be called the zeta kernel at $s$ and 
\begin{equation}\label{kappa_s}
\kappa_s(z) = \sum_{k=0}^\infty \phi_k(\bar{s}) z^k \ \ \mathrm{where} \ \ \phi_k(s) := \Lambda^{(s)}(z^k).
\end{equation}
The name comes from their relation to $h_k$ and $\zeta$ via the important identity
\begin{equation} \label{Zeta Kernel Relation}
\Lambda^{(s)}(h_k) = \langle h_k,\kappa_s\rangle= -\frac{\zeta(s)}{s} (k^{1-s}-1) \ \ \forall \ \ \Re{s}>1/2, \ k\geq 2.
\end{equation}
 The identity \eqref{Zeta Kernel Relation} appears in \cite{Aditya preprint} but we provide an alternate proof in the appendix for the sake of completeness. It is important to mention that the definition of $h_k$ in \cite{Aditya preprint} has an additional factor of $1/k$ which has been adjusted in \eqref{Zeta Kernel Relation} accordingly. The zeta kernels are used in Chapters 3 and play a key role in Chapter 4.

\section{The orthogonality question} \label{Sec:Nperp}

The objective of this section is to develop a framework for proving when
\begin{equation}\label{Nperp intesection D}
\mathcal{N}^{\perp}\cap L=\lbrace 0\rbrace
\end{equation}
for topological vector spaces $L\subset H^2$. Since $\mathcal{N}^\perp=\{0\}$ is equivalent to the RH by Theorem \ref{Noor density result}, one may also ask if solutions to \eqref{Nperp intesection D} can lead to new zero-free half-planes for the $\zeta$-function. We start by showing that Cauchy duality serves as a bridge between this orthogonality question and completeness questions. 

\begin{prop} \label{density <-> orthogonality}
Let $X$ be a topological linear space with $H^2 \subset X \subset \mathrm{Hol}(\mathbb{D})$, where the inclusions are continuous. If $\mathcal{N}$ is dense in $X$, then
\[
\mathcal{N}^\perp \cap X^* = \{0\} 
\]
 where $X^*\subset H^2$ is the Cauchy dual of $X$. The converse holds when $X$ is Fr\'echet.
\end{prop}
\begin{proof}
First note that since both $\mathcal{N}^\perp$ and $X^*$ are subspaces of $H^2$, their intersection above makes sense. Let $\< f , g \>$ denote the Cauchy pairing for $f\in X$ and $g\in X^*$ and recall that this pairing becomes the usual $H^2$-inner product $\< f , g \>_{H^2}$ when $f,g\in H^2$ (see Subsection 1.5). Therefore if $\mathcal{N}$ is dense in $X$, then
\[
g\in \mathcal{N}^\perp \cap X^*
\implies \< f , g \> = \< f , g \>_{H^2}=0 \  \ \mathrm{for} \ \mathrm{all} \ \ f\in \mathcal{N}
\]
which implies that $g$ must be identically zero in $X^*$. Conversely if $X$ is additionally a Fr\'echet space, then we have access to the Hahn-Banach Theorem. Indeed if $\mathcal{N}$ is not dense in $X$, then there exists a non-zero $g\in X^*$ such that $\< f , g \> = 0 \ \forall\, f\in \mathcal{N}$. This implies that $g$ is $H^2$-orthogonal to $\mathcal{N}$ and hence $g\in\mathcal{N}^\perp \cap X^* \neq \{0\}$.
\end{proof}

 Since $\mathcal{N}$ is dense in $H^p$ for $0<p<1$ (see \cite[Cor. 4.6]{Aditya preprint}), it is also dense in $N^+$ since $H^p\subset N^+$ for all $p>0$. Therefore it follows by Proposition \ref{density <-> orthogonality} that $\mathcal{N}^{\perp}\cap L=\lbrace 0\rbrace$ if $L$ is the Lipschitz class $\Lambda_\alpha$ ($1/2<p<1$ and $\alpha=1/p-1$) or the Gevrey class $\mathcal{G}$ (see Subsection 1.5). However to obtain new zero-free half-planes for $\zeta$, we need $L\subset H^2$ to be large enough to contain some $H^q$ space for $q\geq 2$.

\begin{cor}\label{Nperp interects H^p} If $\mathcal{N}^{\perp}\cap H^q=\lbrace 0\rbrace$ for some $q\geq 2$, then $\zeta(s)\neq 0$ for $\Re{s}>1/p$, where $1/p+1/q=1$. If $\mathcal{N}^{\perp}\cap \mathrm{BMOA}=\lbrace 0\rbrace$, then $\zeta(s)\neq 0$ for $\Re{s}\geq 1$.
\end{cor}
\begin{proof} Notice that $H^q$ is the Cauchy dual of $H^p$ and $q\geq 2$ implies that $1<p\leq 2$. In this range the $H^p$ are Banach spaces, and in particular Fr\'{e}chet spaces. Similarly the BMOA space is the Cauchy dual of $H^1$. Therefore the result follows by the converse in Proposition \ref{density <-> orthogonality} and by Theorem \ref{Noor density result}.
\end{proof}
The next result relates Toeplitz operators on $H^p$ and the orthogonality question. Recall that if $\varphi\in N^+$ is a unit, then $1/\varphi\in N^+$ and hence both $T_\varphi$ and its inverse $T_{1/\varphi}$ are densely defined Toeplitz operators on $H^p$ for $p>1$ (see Subsection 1.4).



\begin{prop} \label{hk/phi Hp}
Let $\varphi$ be a unit in $N^+$ with $T_\varphi$ the Toeplitz operator on $H^p$ for some $p>1$. If $T_\varphi \mathcal{N}=\varphi\mathcal{N}$ is dense in $H^p$, then \[\mathcal{N}^\perp \cap \mathrm{dom}_q(T_{1/\varphi}^*) = \{0\}, \ \ \  where  \ \frac{1}{p}+\frac{1}{q}=1.\]  The Hilbertian case $p=2$ gives $\mathcal{N}^\perp \cap \mathcal{H}(b)= \{0\}$ where $(b,a)$ is the Pythagorean pair associated with $1/\varphi\in N^+$.
\end{prop}
\begin{proof}

Let $g \in \mathcal{N}^\perp\cap \mathrm{dom}_q(T_{1/\varphi}^*)$. Since both $g$ and $h_k$ belong to $H^2$ for all $k\geq 2$, the Cauchy duality $\< h_k, g \>$ coincides with the $H^2$-inner product $\< h_k, g \>_2$. Hence
\[
\< \varphi h_k, T^*_{1/\varphi}g \> = \< h_k, g \> = \< h_k, g \>_2 = 0 \ \ \forall \ k\in\mathbb{N}
\]
 which implies that $T^*_{1/\varphi}g=0$ in $H^q$ by the density of $\varphi \mathcal{N}$ in $H^p$. Since $1/\varphi=b/a$ is a unit in $N^+$ (as the inverse of $\varphi$), both $a$ and $b$ are $H^\infty$ outer functions in $H^p$. So $T_{1/\varphi}(a H^p)=b H^p$ shows that $T_{1/\varphi}$ has dense range in $H^p$ since $b$ is outer. So $T^*_{1/\varphi}$ is injective and therefore $g=0$ in $H^q$. This concludes the general case. The Hilbertian case $p=2$ now follows by Theorem \ref{dom of adjoint}.
\end{proof}

We shall derive two non-trivial applications of this result. The first one extends Theorem \ref{Nperp local D} to all local Dirichlet spaces $\mathcal{D}_1^p$  with $p>1$. We note that the classical $\mathcal{D}_{\delta_1}$ is just $\mathcal{D}_1^2$ which is strictly smaller than $\mathcal{D}_{1}^p$ for $p\in (1,2)$ (see Subsection 1.2). We will need with the following approximation result from \cite{Aditya preprint}.

\begin{lem} \label{(I-S)N dense Hp} 
Let $\mu$ the M\"obius function. Then 
\[
\sum_{k=2}^n \frac{\mu(k)}{k} (I-S) h_k \to 1-z
\]
in the $H^p$ norm for all $p>0$.
\end{lem}
We observe that $I-S=T_\varphi$ where $\varphi(z)=1-z$ is an $H^\infty$ outer function since $\mathbb{C}[z]\varphi$ is dense in $H^2$. In particular $\varphi$ is a unit in $N^+$. Define operators on $H^p$ by
\begin{equation}\label{W_n}
     (W_n)f(z)=(1 + z + \dots + z^{n-1}) f(z^n) =\frac{1-z^n}{1-z}f(z^n)
\end{equation}
and $(T_n)f(z)=f(z^n)$ for $n\geq 1$ and $f\in H^p$. The  multiplicative semigroup of operators $(W_n)_{n\geq 1}$ was introduced in \cite{Noor} and is the main object of study in \cite{Juan preprint}. They are bounded on $H^p$ for $p>1$ (see \cite[Cor. 4.6]{Aditya preprint}). We shall need the identities 
\[
W_n \mathcal{N}\subset\mathcal{N} \ \ \mathrm{and} \ \ T_n(I-S)=(I-S)W_n
\]
for $k,n\geq 1$ which appear in \cite[p. 249]{Noor}. We are ready for the first application.

\begin{thm} \label{Nperp local D Hp}
We have
$
\mathcal{N}^\perp \cap \mathcal{D}_1^p = \{0\} 
$ for all $p>1$. 

\end{thm}
\begin{proof}
Let $\varphi(z) =1-z$. By Propositions \ref{dom D_zeta^p} and \ref{hk/phi Hp} we only need to prove that $\varphi\mathcal{N}$ is dense in $H^p$ for $p>1$. First note that Lemma \ref{(I-S)N dense Hp} implies that $\varphi$ belongs to the $H^p$-closure of $\varphi\mathcal{N}=(I-S)\mathcal{N}$. This in turn implies that $T_n\varphi$ belongs to the $H^p$-closure of $T_n(\varphi\mathcal{N})=\varphi W_n\mathcal{N}\subset\varphi\mathcal{N}$ for all $n\geq 1$. So in particular $\mathrm{span}(T_n\varphi)_{n\geq 1}\subset\ \mathrm{clos}_{H^p}(\varphi\mathcal{N})$. Now $\mathrm{span}(T_n\varphi)_{n\geq 1}=\mathrm{span}(1-z^n)_{n\geq 1}=\mathbb{C}[z]\varphi$ which is dense in $H^p$ for all $p>1$ because $\varphi$ is an $H^\infty$ outer function. This proves that $\mathrm{clos}_{H^p}(\varphi\mathcal{N})=H^p$ and concludes the proof.
\end{proof}
Our second application of Proposition \ref{hk/phi Hp} utilizes recent discoveries in $\mathcal{H}(b)$-space theory to obtain zero-free half-planes for $\zeta$. In view of Corollary \ref{Nperp interects H^p}, we would like to know when the $H^p$ and BMOA spaces are contained in some $\mathcal{H}(b)$ for $\varphi=b/a\in N^+$. Fortuitously for us, these problems were completely solved recently in a preprint by Malman and Seco \cite{Malman-Seco}. They show that $H^{\tilde{p}}\subset\mathcal{H}(b)$ for $\tilde{p}\in(2,\infty)$ if and only if $\varphi\in H^{p}$ where $p=\frac{2\tilde{p}}{\tilde{p}-2}\in(2,\infty)$, and also that $H^\infty\subset\mathrm{BMOA}\subset\mathcal{H}(b)$ if and only if $\varphi\in H^2$. By definition we always have $\mathcal{H}(b)\subset H^2$, and $\mathcal{H}(b)=H^2$ precisely when $\varphi\in H^\infty$. Therefore it makes sense to allow the values $p=2$ and $p=\infty$.
\begin{thm}\label{phiN dense zero free halfplane}
    Suppose $\varphi$ is a unit in $N^+$ such that $1/\varphi\in H^{p}$ for some $p\in(2,\infty)$. If $\varphi\mathcal{N}$ is dense in $H^2$, then $\zeta(s)\neq 0$ for $\Re{s}>\frac{1}{2}+\frac{1}{p}$. The case $p=2$ gives the Prime Number Theorem ($\Re{s}\geq 1$) and $p=\infty$ gives the RH ($\Re{s}>1/2$). 
\end{thm}
\begin{proof} By the results of Malman and Seco \cite{Malman-Seco} mentioned above, $1/\varphi$ belongs to $H^2$ or to $H^\infty$ precisely when $\mathcal{H}(b)$ contains BMOA or $\mathcal{H}(b)=H^2$ respectively, where $1/\varphi=b/a$ and $(b,a)$ the associated Pythagorean pair. Hence the cases $p=2,\infty$ follow by Corollary \ref{Nperp interects H^p} and Proposition \ref{hk/phi Hp}. For the case when $1/\varphi\in H^{p}$ for $p\in(2,\infty)$,  we have $H^{\tilde{p}}\subset\mathcal{H}(b)$ where $p=\frac{2\tilde{p}}{\tilde{p}-2}$ (again by Malman and Seco) or equivalently $\tilde{p}=\frac{2p}{p-2}$. If $1/\tilde{p}+1/\tilde{q}=1$, then we see that $\tilde{q}=\frac{2p}{(p+2)}$ and hence $1/\tilde{q}=1/2+1/p$. The result again follows by Corollary \ref{Nperp interects H^p} and Proposition \ref{hk/phi Hp}.
\end{proof}
An important distinction between Theorem \ref{Noor density result} and Theorem \ref{phiN dense zero free halfplane} is that in the former one must solve density problems in $H^p$ spaces that are non-Hilbertian, while in the latter the density problems are always in $H^2$. The following simple examples of $\varphi$ satisfy the hypothesis of  Theorem \ref{phiN dense zero free halfplane}. Let $\varphi(z)=(1-z)^\alpha$ for some $0<\alpha<1/2$. Then $\varphi$ is an $H^\infty$ outer function and hence a unit in $N^+$ with the property that $1/\varphi\in H^p$ for some $2<p<1/\alpha$. It follows that the density of $\varphi \mathcal{N}$ in $H^2$ would give a new zero-free half-plane for $\zeta$.

\section{The biorthogonality question}
Define the sequence of polynomials \(\{u_k: k \geq 2\}\) by
\begin{equation} \label{gk}
	u_k(z) = \sum_{d | k} \frac{\mu(k/d)}{k/d} (z^{d-1} - z^d), 
\end{equation}
where \(\mu\) denotes the Möbius function and $d | k$ denotes $d$  divides $k$. The main goal of this section is to prove the following theorem.

\begin{thm} \label{mainthm} \(\{u_k: k \geq 2\}\) is complete and biorthogonal to \(\{h_k: k \geq 2\}\) in \(H^2\).
\end{thm}
 Balazard \cite{balazard} noted that with the additional vector \(u_1(z) = 1-z\) the sequence \(\{u_k: k \geq 1\}\) is complete. However it is no longer minimal following Theorem \ref{mainthm}. We first make a key observation. Note that  \(u_k = (I - S)v_k\), where 
\begin{equation} \label{lk}
	v_k(z) =  \sum_{d | k} \frac{\mu(k/d)}{k/d} z^{d-1}.
\end{equation}
 It follows that $\la h_k,\, u_j\ar = \la (I - S^*) h_k, v_j\ar.$
Hence to show that \(\{u_k: k \geq 2\}\) and \(\{h_k: k \geq 2\}\) are biorthogonal, it is suffices to show that 
\[
\la (I - S^*) h_k, v_j\ar=\delta_{kj}.
\]

The proof of Theorem \ref{mainthm} will be divided into four steps. \\ \\
$\mathbf{Step \ 1.}$ \emph{Calculate the Fourier coefficients of \((I-S^*)h_k\).} \\ \\
$\mathbf{Step \ 2.}$ \emph{Prove \(\{u_k: k \geq 2\}\) is biorthogonal to \(\{h_k: k \geq 2\}\).}\\ \\
$\mathbf{Step \ 3.}$ \emph{Characterize all sequences biorthogonal to \(\{v_k: k \geq 2\}\) .} \\ \\
$\mathbf{Step \ 4.}$ \emph{Show that \(\{h_k: k \geq 2\}\)\ is the unique biorthogonal sequence for \(\{u_k: k \geq 2\}\).} \\ \\ 
The $\mathbf{Step \ 4}$ implies the completeness of $\{u_k: k \geq 2\}$ in $H^2$ by Proposition \ref{biorthogonality}. \\

$\mathbf{Step \ 1.}$  We first calculate the Fourier coefficients of \((I-S^*)h_k\).

\begin{lem} \label{coefhk}
	 We have
	\begin{equation*}
		(I-S^*)h_k(z) = \sum_{n=0}^\infty B_k(n+1) z^n
	\end{equation*}
 for all $k \geq 2$ where
	\begin{equation*}
		B_k(n) =
		\begin{cases}
			\frac{k}{n} - \frac{1}{n}, \quad k | n \\
			-\frac{1}{n}, \quad k \nd n
		\end{cases}.
	\end{equation*}
\end{lem}

\begin{proof}
	Note that if \(f(z) = \sum_{n=0}^\infty a_n z^n\), then
	\[
	S^*f(z) = \sum_{n=0}^\infty a_{n+1} z^n.
	\]
	
	Let \(c_n(k)\) be the Fourier coefficients of \(h_k\), i.e.,
	\[h_k(z) = \sum_{n=0}^\infty c_n(k) z^n.\]
	Then, the \(n\)-th Fourier coefficient of \((I-S^*)h_k\) is \(c_n(k) - c_{n+1}(k)\).
	The coefficients \(c_n(k)\) are calculated in \cite[p. 249]{Noor}:
	\begin{equation} \label{cnk}
		c_n(k) = H(n) - H\left(\frac{n}{k}\right) - \log k,
	\end{equation}
	where \(H(x):= \sum_{n \leq x} \frac1n\) for \(x > 0\) and \(H(0)=0\). It follows from (\ref{cnk}) that
	\[
		\begin{split}
			c_{n-1}(k) - c_n(k) &= H(n-1) - H\left(\frac{n-1}{k}\right) -\log k - \left(H(n) - H\left(\frac{n}{k}\right) -\log k\right) \\
			&= -\frac{1}{n} + \sum_{\frac{n-1}{k} < m \leq \frac{n}{k}} \frac{1}{m}.
		\end{split}
	\]
	Note that if there is some \(m \in \NN\) such that \(\frac{n-1}{k} < m \leq \frac{n}{k}\), then \(mk \leq n < mk + 1\), so that \(n = mk\). Therefore, the sum above is non-zero if and only if \(k | n\). Then,
	\begin{equation} \label{cd1d}
		B_k(n) = c_{n-1}(k) - c_n(k) =
		\begin{cases}
			-\frac{1}{n}, \quad k \nd n\\
			\frac{k}{n} - \frac{1}{n}, \quad k | n
		\end{cases}.
	\end{equation}
\end{proof} 

$\mathbf{Step \ 2.}$
We are now able to prove the first part of Theorem \ref{mainthm}.

\begin{thm} \label{minimal}
	\(\{u_k: k \geq 2\}\) is biorthogonal to \(\{h_k: k \geq 2\}\).
\end{thm}
\begin{proof}
	By $\mathbf{Step \ 1}$ it suffices to prove that
	\[ \sum_{d | j} B_k(d) \frac{\mu(j/d)}{j/d} = \la (I- S^*)h_k, v_j \ar = \delta_{kj}, \qquad \forall k,j \geq 2.\]
	There are two cases:
	\begin{itemize}
		\item[(i)] \(k \nd j\).
		Then, \(k \nd d\) for every \(d | j\), therefore
		\[
		\sum_{d | j} B_k(d) \frac{\mu(j/d)}{j/d} = \sum_{d | j} -\frac{1}{d} \frac{\mu(j/d)}{j/d} = -\frac{1}{j} \sum_{d | j} \mu(j/d) = -\frac{1}{j} \left\lfloor \frac{1}{j} \right\rfloor = 0,
		\]
		since \(j \geq 2\) and by the basic relation \(\sum_{d|k} \mu(d) = \lfloor 1/k \rfloor\).
		
		\item[(ii)]\(k | j\).
		Let \(q = \frac{j}{k}\). Then
		\begin{equation*}
			\sum_{d | j} B_k(d) \frac{\mu(j/d)}{j/d} = \sum_{\substack{d | j \\ k \nmid d}} -\frac{1}{d} \frac{\mu(j/d)}{j/d} + \sum_{\substack{d | j \\ k | d}} \left(\frac{k}{d} - \frac{1}{d}\right) \frac{\mu(j/d)}{j/d}. \\ 
		\end{equation*}
		\end{itemize} 
		The last sum is summing over those \(d\) that satisfy \(k | d | j\). However, \(k | d \iff d=mk\) for some \(m \in \NN\). Since \(j = qk\), it follows that \(d | j \iff m | q\). Hence, the last sum can be written as
		\[
		\sum_{\substack{d | j \\ k | d}} \left(\frac{k}{d} - \frac{1}{d}\right) \frac{\mu(j/d)}{j/d} = \sum_{\substack{d | j \\ k | d}} - \frac{1}{d}\frac{\mu(j/d)}{j/d} + \sum_{m | q} \frac{1}{m} \frac{\mu(q/m)}{q/m}.
		\]
		Therefore,
		\begin{align*}
			\sum_{d | j} B_k(d) \frac{\mu(j/d)}{j/d} &= \sum_{d|j} -\frac{1}{d} \frac{\mu(j/d)}{j/d} + \sum_{m|q}\frac{1}{m} \frac{\mu(q/m)}{q/m} \\
			&= -\frac{1}{j} \sum_{d|j} \mu(j/d) + \frac{1}{q} \sum_{m|q} \mu(q/m) = -\frac{1}{j} \left\lfloor \frac{1}{j} \right\rfloor + \frac{1}{q}\left\lfloor \frac{1}{q} \right\rfloor. \\
		\end{align*}
		Since \(j \geq 2\), the first term is always 0. On the other hand, the second term equals 1 if \(q=1\) and equals \(0\) otherwise. Finally note that \(q=1 \iff k=j\), and hence that
		$ \la h_k, u_j \ar = \delta_{kj}$ for all $k,j \geq 2$.
   \end{proof}


$\mathbf{Step \ 3.}$ We next characterize all sequences in $H^2$ biorthogonal to $\{v_k : k \geq 2\}$.

\begin{lem} \label{biortolk}
	A sequence \(\{f_k : k \geq 2\} \subset H^2\) is biorthogonal to \(\{v_k : k \geq 2\}\) if and only if there exists a sequence \((c_k)_{k\geq2} \in \CC^\NN\) such that
	\[
	f_k(z) = \sum_{n=0}^\infty A_{k}(n+1)\ z^n, \quad \forall k \geq 2,
	\]
	where the sequence $(A_k(n))_{n\geq 1}$ for each $k\geq2$ is defined by
	\[
	A_{k}(n) =
	\begin{cases}
		\frac{c_k}{n}+\frac{k}{n} , \quad k | n \\
		\frac{c_k}{n}, \ \ \ \ \   \quad k \nd n
	\end{cases}.
	\]
\end{lem}

\begin{proof}
	Let \(\{f_k : k \geq 2\} \subset H^2\) be a sequence biorthogonal to \(\{v_k : k \geq 2\}\) and \(A_k: \NN \to \CC\) be the arithmetical functions that satisfy
	\[ f_k(z) = \sum_{n=0}^\infty A_k(n+1) z^n, \quad \forall k \geq 2.\]
	Since the coefficients of \(v_j\) are real, the biorthogonality condition becomes
	\begin{equation} \label{lkc1}
		\sum_{d | j} A_k(d) \frac{\mu(j/d)}{j/d} = \la f_k, v_j \ar = \delta_{kj}, \quad \forall k,j \geq 2.
	\end{equation}
	Let \(I_k, \nu: \NN \to \CC\) be arithmetic functions defined by \(I_k(n) = \delta_{kn}\) and \(\nu(n) = \frac{\mu(n)}{n}\). Then (\ref{lkc1}) is equivalent to
	\begin{equation} \label{condak}
		\forall \ k \geq 2,\, \exists \ c_k \in \CC\ \text{ such that} \quad  A_k * \nu = c_kI_1 + I_k,
	\end{equation}
    where \(*\) denotes the Dirichlet product (see \cite[Section 2.6]{apostol}).
	Indeed, (\ref{lkc1}) doesn't impose any restriction on \(A_k * \nu (1)\), since it only need to hold for \(j \geq 2\), hence \(c_k = A_k * \nu(1)\) is free, so (\ref{lkc1}) and (\ref{condak}) are indeed equivalent. Notice that
	\[
	\sum_{d|k} \frac{\mu(k/d)}{k/d} \frac{1}{d} = \frac{1}{k} \left\lfloor \frac{1}{k} \right\rfloor = I_1(k),
	\]
	i.e., \(\nu^{-1}(n) = \frac{1}{n}\), since \(I_1\) is the unity with respect to $*$. Moreover
	\[
	I_k * \nu^{-1}(n) = \sum_{d | n} \delta_{kd} \frac{1}{n/d} =
	\begin{cases}
		\frac{k}{n}, \quad k|n \\
		0, \quad k \nd n
	\end{cases}.
	\]
	Therefore (\ref{condak}) is equivalent to the  statement that
	\begin{align*}
		\forall \ k \geq 2,\, \exists \ c_k \in \CC \text{ such that } A_k(n) &= c_k\nu^{-1}(n) + I_k * \nu^{-1}(n) \\
		&=
		\begin{cases}
			  \frac{c_k}{n}+\frac{k}{n}, \quad k|n \\
			\frac{c_k}{n},  \ \ \ \ \ \quad k \nd n
		\end{cases}.
	\end{align*}
	Hence the biorthogonality condition \eqref{lkc1} is equivalent to the condition above as desired. Finally \(f_k\in H^2\) since its coefficient sequence \(A_{k}\) clearly belongs to \(\ell^2\). 
\end{proof}





$\mathbf{Step \ 4.}$ In this final step we show that $\{u_k : k \geq 2\}$ is complete in $H^2$ by proving that $\{h_k : k \geq 2\}$ is uniquely biorthogonal to $\{u_k : k \geq 2\}$ in $H^2$ by Proposition \ref{biorthogonality}. To do so, recall that \(u_k = (I - S)v_k\) (see (\ref{lk})) implies
\[\la \phi_k,\, u_j\ar = \la (I - S^*) \phi_k, v_j\ar
\] 
for any sequence $\{\phi_k : k \geq 2\}$ in $H^2$. This implies that $I-S^*$ maps sequences biorthogonal to $\{u_k : k \geq 2\}$ onto sequences biorthogonal to $\{v_k : k \geq 2\}$ in the image of $I-S^*$. This correspondece is one-to-one since $I-S^*$ is injective on $H^2$. Therefore it is enough to prove that $((I-S^*)h_k)_{k\geq 2}$ is the unique sequence in the image of \(I-S^*\) that is biorthogonal to \(\{v_k: k \geq 2\}\).

\begin{lem} \label{ultimolema}
	A sequence \(\{f_k : k \geq 2\} \subset (I-S^*)H^2 \) is biorthogonal to \(\{v_k : k \geq 2\}\) if and only if
	\begin{equation} \label{fk}
		f_k(z) = \sum_{n=0}^\infty B_k(n+1) z^n = (I-S^*)h_k,
	\end{equation}
	where \(B_k\) are the sequences defined in Lemma~\ref{coefhk}.
\end{lem}

\begin{proof}
	Let \(\{f_k : k \geq 2\} \subset (I-S^*)H^2\) be a sequence biorthogonal to \(\{v_k : k \geq 2\}\) and let \(\fii_k \in H^2\) such that \(f_k = (I-S^*)\fii_k\). If \((b_k(n))_{n\geq 0}\) are the Maclaurin coefficients of \(\fii_k\), then
	\[
	f_k(z) = \sum_{n=0}^\infty (b_k(n) - b_k(n+1)) z^n.
	\]
	It then follows by Lemma~\ref{biortolk} that for each \(k \geq 2\), there exists a \(c_k \in \CC\) such that
	\[ 
	b_k(n-1) - b_k(n) = A_{k}(n) = 
	\begin{cases}
		\frac{k}{n} + \frac{c_k}{n}, \quad k|n \\
		\frac{c_k}{n}, \quad k \nd n
	\end{cases},
	\quad \forall n \geq 1.
	\]
	By induction, we obtain
	\begin{align*}
		b_k(n) &= b_k(0) - \sum_{j=1}^n A_{k}(j) = b_k(0) - \sum_{j\leq n} \frac{c_k}{j} - \sum_{\substack{j \leq n \\ k | j}} \frac{k}{j}\\
		&= b_k(0) - c_k \sum_{j \leq n} \frac{1}{j} - \sum_{m \leq n/k} \frac{1}{m} = b_k(0) - c_k H(n) - H\left(\frac{n}{k}\right),
	\end{align*}
	where \(H\) is the same function used in (\ref{cnk}). Since \(\fii_k \in H^2\), we get \((b_k(n))_n\in\ell^2\) and hence \(\lim_{n \to \infty} b_k(n) = 0\). So \((c_kH(n) + H(n/k))_n\) converges. Using Euler summation, one gets (see \cite{Noor}) 
	\begin{equation} \label{H}
		H(x) = \log x + \gamma + O\left(\frac{1}{x}\right),
	\end{equation}
	where \(\gamma\) is Euler-Mascheroni constant. Therefore,
	\begin{align*}
		c_kH(n) + H\left(\frac{n}{k}\right) &= c_k \log n + c_k\gamma + \log n - \log k + \gamma + O\left(\frac{k}{n}\right)  \\
		&= (c_k + 1) \log n + (c_k + 1) \gamma - \log k + O\left(\frac{k}{n}\right),
	\end{align*}
	which converges as \(n \to \infty\) if and only if \(c_k = -1\). Hence \(c_k = -1\) for all \(k \geq 2\). In that case \(A_{k} = B_k\) and we obtain (\ref{fk}). The converse is equivalent to Theorem~\ref{minimal} by the remarks at the start of Step $4$.
\end{proof}
As a consequence of Theorem \ref{Noor density result} and Proposition \ref{biorthogonality} the RH holds if and only if $(u_j)_{j\geq 2}$ is the unique sequence in \(H^2\) that is biorthogonal to $(h_k)_{k\geq2}$. On the other hand the next result shows what happens if some $\zeta$-zero violates the RH.
\begin{cor} If $\zeta(s_0)=0$ for some $1/2<\Re{s_0}<1$, then 
\[
\la h_k, u_j+\kappa_{s_0} \ar = \delta_{kj} \ \ \forall \ \ k,j \geq 2
\]
where $\kappa_{s_0}$ is the zeta kernel at $s_0$. So $(u_j+\kappa_{s_0})_{j\geq 2}$ is also biorthogonal to $(h_k)_{k\geq2}$.
\end{cor}
\begin{proof} This follows by \eqref{Zeta Kernel Relation} and Theorem \ref{minimal} since $\langle h_k,\kappa_{s_0}\rangle=0$ for all $ k\geq 2$.
\end{proof}
\section{The RH-failure conjecture}

The RH-failure (RHF) conjecture states that if the RH is false, then $\zeta(s)=0$ for infinitely many $s\in\mathbb{C}$ with $1/2<\Re{s}<1$. Our goal is to prove the following.

\begin{thm} \label{RHF implies dim N perp 0 or infty}
    The RHF conjecture implies that \(\dim (\mathcal{N}^{\perp})\) is either 0 or \(\infty\).
\end{thm}

Let \(\mathcal{K}:= \{\kappa_s: \Re{s}>1/2\}\) denote the family of zeta kernels. If $\zeta(s)=0$ for some $\Re{s}>1/2$, then $\langle h_k,\kappa_s\rangle=0$ for all $k\geq 2$ by \eqref{Zeta Kernel Relation} and hence $\kappa_s\in\mathcal{N}^\perp$ . So the RHF conjecture implies that $\mathcal{N}^\perp\cap\mathcal{K}$ is either empty (by Theorem \ref{Noor density result}) or has infinitely many elements. Therefore Theorem \ref{RHF implies dim N perp 0 or infty} follows if we show that $\mathcal{K}$ is linearly independent in $H^2$. We first show that elements of $\mathcal{K}$ are common eigenvectors for the adjoints of operators $(W_n)_{n\geq1}$ defined in \eqref{W_n}. For $f\in H^2 $ and $n\in\mathbb{N}$, we have
\[W^*_nf(z)=\sum_{k=0}^\infty [\hat{f}(nk)+\hat{f}(nk+1)+\ldots+\hat{f}(nk+n-1)]z^k \ \ 
\]
where $\hat{f}(n)$ denotes the $n$-th Fourier coefficient of $f$. This formula first appeared in \cite{Juan preprint}. It is possible to describe the common eigenvectors of $(W_n^*)_{n\geq1}$ completely.

\begin{prop}\label{common W^*_n eigenvectors}
    A non-zero \(f \in H^2\) is a common eigenvector for $(W_n^*)_{n\geq 1}$ if and only if there exists a multiplicative sequence $(\lambda_n)_{n\geq 1}$ with $(\lambda_{n+1}-\lambda_n)_{n\geq 1}\in\ell^2$ and 
    \begin{equation}\label{f hat}\hat{f}(n)=(\lambda_{n+1}-\lambda_n)\hat{f}(0) \ \ \forall \ \ n\geq 1.
    \end{equation}  
    Moreover \(W_n^*f=\lambda_nf\) for all $n\geq 1$. 
\end{prop}
By a multiplicative sequence $(\lambda_n)_{n\geq 1}$ we mean that $\lambda_n\lambda_m=\lambda_{nm}$ and $\lambda_1=1$. Similarly one can see that  $W_nW_m=W_{nm}$ and $W_1=I$ by \eqref{W_n}.
\begin{proof}
    Let $W_n^* f = \lambda_n f$ for $n\geq 1$ and some sequence $(\lambda_n)_{n\geq 1}$. Since $W_1^*=I$ and \(W_{nm}^* = W_n^* W_m^*\) it follows that $(\lambda_n)_{n\geq 1}$ is multiplicative. Furthermore
    \[
    \lambda_n \hat{f}(k) = \<W_n^* f, z^k\> = \<f, W_n z^k\> = \la f, \sum_{j=0}^{n-1} z^{nk+j} \ar = \sum_{j=0}^{n-1} \hat{f}(nk+j).
    \]
    which gives
    $\hat{f}(n)= \lambda_{n+1}\hat{f}(0) - \lambda_n \hat{f}(0)$ for all  $n\geq 1$ and hence $(\lambda_{n+1}-\lambda_n)_{n\geq 1}\in\ell^2$. Conversely suppose \(f\) is a non-zero function satisfying \eqref{f hat} for some multiplicative $(\lambda_n)_{n\geq 1}$ with $(\lambda_{n+1}-\lambda_n)_{n\geq 1}\in\ell^2$. Normalizing by supposing \(\hat{f}(0) = 1\), we get
    \begin{align*}
        (W_n^*f)(z) &= \sum_{k=0}^\infty \left(\sum_{j=0}^{n-1} \hat{f}(nk+j) \right) z^k = \sum_{j=0}^{n-1} \hat{f}(j) + \sum_{k=1}^\infty \left(\sum_{j=0}^{n-1} \hat{f}(nk+j) \right) z^k \\
        &= \lambda_n + \sum_{k=1}^\infty (\lambda_{nk+n} - \lambda_{nk})z^k = \lambda_n \left(1+\sum_{k=1}^\infty (\lambda_{k+1} - \lambda_k) z^k \right)= \lambda_n f(z)
    \end{align*}
   for all $n \geq 2$. So \(f \in H^2\) is a common eigenvector for $(W_n^*)_{n\geq 1}$.
\end{proof}

Choosing $\lambda_k=k^{1-\bar{s}}$ and $\hat{f}(0)=-1/\bar{s}$ in Proposition \ref{common W^*_n eigenvectors} for any fixed $\Re{s}>1/2$ shows that each $\kappa_s\in\mathcal{K}$ is a common eigenvector for $(W_n^*)_{n\geq 1}$ (see \eqref{kappa_s}) with
\begin{equation}\label{kapp_s eigenvalue}
W_n^*\kappa_s=n^{1-\bar{s}}f \ \ \forall \ n\geq 1.
\end{equation}
We want to prove that for any finite subset $\{\kappa_{s_1},\ldots,\kappa_{s_\ell}\}\subset\mathcal{K}$ there exists some $W_n^*$ such that the corresponding eigenvalues are all distinct. This will give us the linear independence of every finite subset of $\mathcal{K}$ and hence of 
$\mathcal{K}$ itself. First suppose that the real parts of \(s_1,\, \dots , s_\ell \) are all distinct. Since $|n^{1-\bar{s}}| = n^{1-\Re{s}}$ it follows that the eigenvalues of $W_n^*$ (for all $n>1$) corresponding to $\kappa_{s_1},\ldots,\kappa_{s_\ell}$ are all distinct. If the real parts of \(s_1,\, \dots , s_\ell \) are \emph{not} all distinct, then we need the following result.

\begin{lem}\label{Pigeon lemma}
    Given distinct \(a_1,\, \dots,\, a_n \in \RR\), at most finitely many primes \(p\) have the property that there exists a pair $a_i,a_j$ with $1 \leq i < j\leq n$ such that 
    \begin{equation} \label{eq1}
      \ (a_i - a_j) \log p \in 2\pi \ZZ.
    \end{equation}
   
    \end{lem}

\begin{proof}
    Suppose there are infinitely many primes that satisfy (\ref{eq1}). For each such prime \(p\) there exists some \(1 \leq i < j \leq n\) and \(k \in \ZZ \setminus \{0\}\) such that
    \[
    (a_i - a_j) \log p = 2\pi k \ \implies \ \frac{2 \pi k}{\log p} = a_i - a_j.
    \]
    But since there are only finitely many numbers \(a_i - a_j\) with \(i<j\), and none of which equal 0, there must exist distinct primes \(p,q\)  and \(k_1, k_2 \in \ZZ\setminus \{0\}\) such that
    \[
    \frac{2\pi k_1}{\log p} = a_i - a_j = \frac{2\pi k_2}{\log q} \ \implies \ k_2\log{p}=k_1\log{q}\neq 0.
    \]
    for some pair $i<j$. In particular, \(p^{k_2} = q^{k_1} \neq 1\), which is a contradiction.
\end{proof}

The following result then completes the proof of Theorem~\ref{RHF implies dim N perp 0 or infty}.

\begin{prop}
    The family of zeta kernels \(\mathcal{K}\) is linearly independent.
\end{prop}

\begin{proof} Let $\{\kappa_{s_1},\ldots,\kappa_{s_\ell}\}\subset\mathcal{K}$ be a finite subset. The case when the real parts of \(s_1,\, \dots , s_\ell \) are all distinct was already dealt with. Suppose some of the \(s_1,\, \dots , s_\ell \) have the same real parts. So \(\{s_1,\, \dots , s_\ell\}\) is the finite disjoint union of sets of the form $A_r:=\{s_i:\Re{s_i}=r, \ i=1,\ldots,\ell\}$ for $r\in\mathbb{R}$. It is enough to prove that the family $\{\kappa_s:s\in A_r\}$ is linearly independent when $A_r$ has more than one element. Since \(s_1,\, \dots , s_\ell \) are distinct complex numbers, the imaginary parts of elements in $A_r$, which we denote by $a_1,\ldots,a_n$, must all be distinct. Applying Lemma \ref{Pigeon lemma} to $a_1,\ldots,a_n$ shows that there exist infinitely many primes $q$ such that
\begin{equation} \label{condicao}
(a_i - a_j) \log q \notin 2\pi \ZZ, \quad \forall \ 1 \leq i<j \leq n. 
\end{equation}
For such a prime $q$, we claim that the $\{\kappa_s:s\in A_r\}$ are \(W_q^*\)-eigenvectors with distinct eigenvalues. To see this first note that \(W_q^*\kappa_s = q^{1-\bar{s}} \kappa_{s}\) by \eqref{kapp_s eigenvalue} and
\[
q^{1-\bar{s}}=e^{(1-\bar{s})\log{q}}=e^{(1-r)\log q}e^{i\mathrm{Im}{(s)}\log q} \ \ \forall \ s\in A_r.
\]
But Im$(s)$ for $s\in A_r$ are precisely the real numbers $a_1,\ldots,a_n$. Therefore the eigenvalues $q^{1-\bar{s}}$ for $s\in A_r$ are all distinct by \eqref{condicao} and hence $\{\kappa_s:s\in A_r\}$ and therefore all of $\mathcal{K}$ is linearly independent.  
\end{proof}

\section{Appendix}

Denote by $\mathbb{C}_\rho$ the half-plane $\{s\in\mathbb{C}:\Re{s}>\rho\}$. In this appendix we provide an alternate proof for the fundamental relation 
\begin{equation}\label{zeta relation 2}
\langle h_k, \kappa_{s}\rangle= -\frac{\zeta(s)}{s} (k^{1-s} - 1) \ \ \forall  \ s\in\mathbb{C}_{1/2}, \ k \geq 2.
\end{equation}
We first prove that (\ref{zeta relation 2}) holds for all $s\in\mathbb{C}_1$. We then prove that the function \(s \longmapsto \langle h_k,\kappa_s\rangle\) has an analytic continuation to $\mathbb{C}_0$ for each \(k \geq 2\). Since the right side of (\ref{zeta relation 2}) is already analytic for \(s\in\CC \setminus \{0\}\), the result then follows by analytic continuation. Recall from Subsection 1.7 that
\[
\kappa_s(z) = \sum_{n=0}^\infty \phi_n(\bar{s}) z^n \ \ \mathrm{where} \ \  \phi_n(s)=-\frac{1}{s}\left((n+1)^{1-s}-n^{1-s}\right).
\]

\begin{lem} \label{calcgeq1}
	The identity \eqref{zeta relation 2} holds for $s\in\mathbb{C}_1$.
	\end{lem}
\begin{proof} Let \((c_n(k))_n\) be the Fourier coefficients of \(h_k\). Since \(\overline{\phi_n(\overline{s})} = \phi_n(s)\), we have
	\begin{align*}
		\langle h_k, \kappa_{s}\rangle &= \sum_{n=0}^\infty c_n(k)\overline{\phi_n(\overline{s})} = \sum_{n=0}^\infty c_n(k) \phi_n(s) \\
    &= \lim_{N \to \infty} \left( -\frac{c_0(k)}{s} - \frac{1}{s} \sum_{n=1}^N c_n(k)\left((n+1)^{1-s} - n^{1-s}\right) \right) \\
		&= \lim_{N \to \infty} \left( -\frac{1}{s} \sum_{n=0}^N c_n(k) (n+1)^{1-s} + \frac{1}{s} \sum_{n=1}^N c_n(k)n^{1-s} \right) \\
		&= \lim_{N \to \infty} \left( -\frac{1}{s} \sum_{n=1}^N (c_{n-1}(k) - c_n(k)) n^{1-s} - \frac{1}{s} c_N(k)(N+1)^{1-s} \right).
	\end{align*}
	Since \(c_n(k) = O(k/n)\) (see \cite[p. 249]{Noor}), we have \(c_N(k)(N+1) = O(1)\). Furthermore \((N+1)^{-s} \to 0\) for \(\Re(s)>0\) and therefore we get
	
	\begin{align*}
		\langle h_k, \kappa_{s}\rangle &= -\frac{1}{s} \lim_{N \to \infty} \left( \sum_{n=1}^N (c_{n-1}(k) - c_n(k)) n^{1-s} \right) \\
  &\overset{(\ref{cd1d})}{=} -\frac{1}{s} \lim_{N \to \infty} \left( \sum_{n=1}^N -\frac{1}{n} n^{1-s} + \sum_{\substack{n=1 \\ k | n}}^N \frac{k}{n} n^{1-s} \right) \\
		&= -\frac{1}{s} \lim_{N \to \infty} \left( \sum_{n=1}^N n^{-s} + \sum_{m=1}^{\lfloor \frac{N}{k}\rfloor} \frac{1}{m} (mk)^{1-s} \right) \\
  &= -\frac{1}{s} \lim_{N \to \infty} \left( - \sum_{n=1}^N n^{-s} + k^{1-s} \sum_{m=1}^{\lfloor \frac{N}{k}\rfloor} \frac{1}{m} m^{1-s} \right) \\
		&\overset{(*)}{=} -\frac{1}{s} (-\zeta(s)) - \frac{k^{1-s}}{s} \zeta(s) = -\frac{\zeta(s)}{s} (k^{1-s} - 1), 
	\end{align*}
	where in \((*)\) we split the limit in two and use the definition of \(\zeta\) for \(\Re(s)>1\).
\end{proof}
The inner product $\langle h_k, \kappa_{s}\rangle$ defined for $s\in\CC_{1/2}$ also makes sense for $s\in\CC_0$.
\begin{lem} \label{hol}
	The function \(\Phi_k: \CC_{1/2} \to \CC\) defined by
	\begin{equation} \label{fik}
		\Phi_k(s) := \langle h_k, \kappa_{s}\rangle  = \sum_{n=0}^\infty c_n(k)\phi_n(s).
	\end{equation}
	 has an analytic continuation to \(\CC_0\) for each \(k \geq 2\).
\end{lem}
\begin{proof}
	Since each $\phi_n$ is holomorphic in $\mathbb{C}_0$, it is sufficient to prove that the series in (\ref{fik}) converges uniformly in every half-plane \(\CC_{\rho}\) for $\rho>0$. Note that 
 \[
 |\phi_n(s)|=\frac{|1-s|}{|s|}\left|\int_n^{n+1}y^{-s}dy\right|\leq\frac{|1-s|}{|s|}n^{-\Re{s}}=O(n^{-\rho})
 \]
 for $s\in\mathbb{C}_\rho$ with $\rho>0$. Also \(c_n(k) = O(k/n)\) for each $k\geq 2$, and hence we get $c_n(k)\phi_n(s)=O(n^{-1-\rho})$ for $s\in\mathbb{C}_\rho$. So
	$\Phi_k$ converges uniformly in \(\CC_\rho\) for $\rho>0$.    
\end{proof}

\section*{Acknowledgement}
This work was financed in part by the Coordenação de Aperfeiçoamento de Nivel Superior- Brasil (CAPES)- Finance Code 001. 
The fourth author is supported by grant \#2023 Provost of Inclusion and Belonging, University of São Paulo (USP).

\bibliographystyle{amsplain}

\end{document}